\documentclass[11pt]{article}

\usepackage[margin=1in]{geometry}         

\geometry{letterpaper}

\usepackage{longtable}
\usepackage{graphicx}
\usepackage{caption,subcaption}
\usepackage{stmaryrd}
\usepackage{framed}
\usepackage{amssymb}
\usepackage{epstopdf}
\usepackage{amsmath,amsfonts,amssymb}
\usepackage{enumerate}
\usepackage{multirow}
\usepackage{fixltx2e}
\usepackage{bbm}
\usepackage{url}
\usepackage{cite}
\usepackage{fullpage}
\usepackage{fancyhdr}
\usepackage{empheq}
\usepackage{color}
\usepackage{float}
\usepackage{soul}
\usepackage{graphicx}
\usepackage[doc]{optional}
\usepackage{enumitem}
\usepackage{xcolor}
\usepackage[toc,page]{appendix}
\usepackage{theorem}
\usepackage{array}
\usepackage{booktabs}
\usepackage{epsfig}
\usepackage{latexsym}


\parindent  4mm
\parskip    4pt 
\tolerance  3000

\definecolor{labelkey}{rgb}{0,0.08,0.45}
\definecolor{refkey}{rgb}{0,0.6,0.0}

\setlength\extrarowheight{5pt}

\DeclareGraphicsRule{.tif}{png}{.png}{`convert #1 `dirname #1`/`basename #1 .tif`.png}

\definecolor{myblue}{rgb}{.9, .9, 1}


\newtheorem{theorem}{Theorem}[section]
\newtheorem{lemma}[theorem]{Lemma}
\newtheorem{corollary}[theorem]{Corollary}
\newtheorem{proposition}[theorem]{Proposition}
\newtheorem{definition}[theorem]{Definition}

\theoremstyle{plain}{\theorembodyfont{\rmfamily}

\theoremstyle{plain}{\theorembodyfont{\rmfamily}
}
\theoremstyle{plain}{\theorembodyfont{\rmfamily}
}
\theoremstyle{plain}{\theorembodyfont{\rmfamily}
}
\theoremstyle{plain}{\theorembodyfont{\rmfamily}
}

\theoremstyle{plain}{\theorembodyfont{\rmfamily}
}

\theoremstyle{plain}{\theorembodyfont{\rmfamily}

\newenvironment{Calg}[1]
  {\customC}
  {\endcustomC}

\theoremstyle{plain}{\theorembodyfont{\rmfamily}

\theoremstyle{plain}{\theorembodyfont{\rmfamily}


\newcommand{\dom}{\ensuremath{\operatorname{dom}}}

\def\RR{{\mathbb{R}}}

\def\NN{{\mathbb{N}}}

\newcommand{\la}{\langle}
\newcommand{\ra}{\rangle}

\newcommand{\nexto}{\kern -0.54em}

\newcommand{\dZ}{{\cal Z \kern -0.7em Z}}
\newcommand{\dC}{{\rm\hbox{C \kern-0.8em\raise0.2ex\hbox{\vrule height5.4pt width0.7pt}}}}
\newcommand{\dQ}{{\rm\hbox{Q \kern-0.85em\raise0.25ex\hbox{\vrule height5.4pt width0.7pt}}}}

\newenvironment{retraitsimple}{\begin{list}{--~}{
 \topsep=0.3ex \itemsep=0.3ex \labelsep=0em \parsep=0em
 \listparindent=1em \itemindent=0em
 \settowidth{\labelwidth}{--~} \leftmargin=\labelwidth
}}{\end{list}}

\begin{document}
\title{A hybrid method for solving systems of operator inclusion problems}
\author{R. D\'iaz Mill\'an\footnote{ Federal Institute of Education, Science and
Technology, Goi\^ania, Brazil, e-mail: rdiazmillan@gmail.com}} \maketitle
\begin{abstract}
In this paper, we propose an algorithm combining the forward-backward splitting method and the alternative projection method for solving the system of splitting  inclusion problem. We want to find a point in the interception of a finite number of sets  that we don't know, the solution of each component of the system. The algorithm consists of approximate the sets involved in the problem by separates halfspaces which are a known strategy.   By finding these halfspaces in each iteration we use only one inclusion problem of the system. The iterations consist of two parts, the first contains an explicit Armijo-type search in the spirit of the extragradient-like methods for variational inequalities.  In the iterative process, the operator forward-backward is computed only one time for each inclusion problem, this represents a great computational saving because the computational cost of this operator is nothing cheap. The second part consists of special projection step, projecting in the separating halfspace. The convergence analysis of the proposed scheme is given assuming monotonicity all operators, without any Lipschitz continuity assumption.

\bigskip

\noindent{\bf Keywords:} Armijo-type search,
Maximal monotone operators, Forward-Backward, Alternative projection, Systems of inclusion problems, Armijo-type search

\noindent{\bf Mathematical Subject Classification (2008):} 90C47,
49J35.
\end{abstract}
\section{Introduction}
The goal of this paper is to present an algorithm for solving the system of inclusion problem, in which each component of the system is a sum of two operators, one point-to-set and the other point-to-point. Given a finite family of pair of operators $\{ A_i,B_i \}_{i \in \mathbb{I}}$, with $\mathbb{I}=:(1,2,\cdots, m)$ and $m\in \NN$.
The system of inclusion problem consists in:
\begin{equation}\label{problema}
\mbox{find} \ \ x^*\in \RR^n  \ \  \mbox{such that} \ \  0\in A_i(x^*)+B_i(x^*) \ \ \mbox{for all} \ \ i\in \mathbb{I},
\end{equation}
where, for all $i\in\mathbb{I}$, the operators $A_i:\dom(A_i)\subset \RR^n \rightarrow \RR^n$ are point-to-point and maximal monotone and the operators $B_i:\dom(B_i)\subset \RR^n\rightarrow 2^{\RR^n}$ are point-to-set maximal monotone operators. The solution of the problem, denoted by $S_*$, is given by the interception of the solution of each component of the system, i.e., $S_*=\cap_{i\in \mathbb{I}} S^i_*$, where $S^i_*$ is defined as $S^i_*:=\{x\in \RR^n: 0\in A_i(x)+B_i(x)\}$.

 Many problems in mathematics and science in general can be modeled as problem \eqref{problema}, for example, taking the operators $B_i = N_{C_i}$ with $C_i \subset \RR^n$ convex sets for all $i\in \mathbb{I}$ we have the system of variational inequalities, introduced by I.V. Konnov in \cite{konnov1}, which have been studied in \cite{konnov, eslam, gibali, gibali2, gibali3, konnov1} and others. Some forward-backward algorithms for solving the inclusion problem, when the system contains just one equation, the hypothesis of Lipschitz continuity is very common see \cite{tseng, ranf}. In this paper, we improve this results assuming only maximal monotonicity  for all operators $A_i$ and $B_i$. Also, we improve the linesearch proposed by Tseng in \cite{tseng}, calculating only one time the forward-backward operator in each tentative to find the step size. Another advantage of the proposed algorithm is that in each iteration we not calculate the interception of any hyperplane like was do it in \cite{reinier}, and we use only one component of the system in each step of the algorithm, in the spirits of the alternative projection method. This improves the algorithm in the computational sense because any hard subproblem must be solved and because the forward-backward operator is very expensive to compute. The present work follows the ideas of the works \cite{rei-yun, phdthesis, borw-baus}.

Problem \eqref{problema} have many applications in operations research, optimal control, mathematical physics, optimization and differential equations. This kind of problem has been deeply studied and has recently received a lot of attention, due to the fact that many nonlinear problems, arising within applied
areas are mathematically modeled as nonlinear operator system of equations and/or inclusions, which each one is decomposed as a sum of two operators.
\section{Preliminaries}
 In this section, we present some notation, definitions and results needed for the
convergence analysis of the proposed algorithm.  The inner product in
$\RR^n$ is denoted by $\la \cdot , \cdot \ra$ and the norm induced
by the inner product by $\|\cdot\|$. We denote by $2^{C}$ the power set of $C$. For $X$ a nonempty, convex and
closed subset of $\RR^n$, we define the orthogonal projection of $x$
onto $X$ by $P_X(x)$, as the unique point in $X$, such that $\|
P_X(x)-x\| \le \|y-x\|$ for all $y\in X$. Let $N_X(x)$
be the normal cone to $X$ at $x\in X$, i.e.,
$N_X(x):=\{d\in \RR^n\, : \, \la d,x-y\ra\ge 0 \;\; \forall
y\in X\}$. Recall that an operator $T:\RR^n \rightarrow 2^{\RR^n}$
is monotone if, for all $(x,u),(y,v)\in Gr(T):=\{(x,u)\in
\RR^n\times \RR^n : u\in T(x)\}$, we have $\la x-y, u-v \ra \ge 0,$
and it is maximal if $T$ has no proper monotone extension in the
graph inclusion sense. Now some known results.

\begin{proposition}\label{proj}
Let $X$ be any nonempty, closed and convex set in $\RR^n$. For all $x,y\in \RR^n$ and all $z\in X $ the following hold:
\begin{enumerate}
\item $ \|P_{X}(x)-P_{X}(y)\|^2 \leq \|x-y\|^2-\|(P_{X}(x)-x)-\big(P_{X}(y)-y\big)\|^2.$
\item $\la x-P_X(x),z-P_X(x)\ra \leq 0.$
\item $P_X=(I+N_X)^{-1}.$
\end{enumerate}
\end{proposition}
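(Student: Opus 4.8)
The plan is to prove item (ii) first, since it is the geometric core of the statement, and then obtain (iii) and (i) as consequences.

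\textbf{Item (ii).} Fix $x\in\RR^n$ and $z\in X$. Because $X$ is convex, the segment $t\mapsto P_X(x)+t\big(z-P_X(x)\big)$ lies in $X$ for every $t\in[0,1]$, so the scalar function $\phi(t)=\|x-P_X(x)-t(z-P_X(x))\|^2$ satisfies $\phi(t)\ge\phi(0)$ on $[0,1]$ by the defining minimality property of $P_X(x)$ recalled in the Preliminaries. Hence $\phi$ has a minimum at $t=0$, so its right derivative there is nonnegative. Expanding $\phi$ as a quadratic in $t$ gives $\phi'(0)=-2\langle x-P_X(x),\,z-P_X(x)\rangle\ge0$, which is exactly (ii).

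\textbf{Item (iii).} I would establish the two inclusions via a pointwise equivalence. For $u\in X$, unfolding the definition of the normal cone shows that $x-u\in N_X(u)$ is the same as $\langle x-u,\,y-u\rangle\le0$ for all $y\in X$; by (ii) this holds with $u=P_X(x)$, so $x\in(I+N_X)(P_X(x))$. Conversely, if $u\in X$ satisfies $\langle x-u,y-u\rangle\le 0$ for all $y\in X$, then for any $y\in X$ the identity $\|x-y\|^2=\|x-u\|^2+2\langle x-u,u-y\rangle+\|u-y\|^2\ge\|x-u\|^2$ forces $u$ to realize the distance from $x$ to $X$, hence $u=P_X(x)$ by uniqueness of the projection. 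Since $\dom N_X=X$, combining the two directions yields $P_X=(I+N_X)^{-1}$.

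\textbf{Item (i).} Write $p=P_X(x)$ and $q=P_X(y)$, and apply (ii) twice: $\langle x-p,\,q-p\rangle\le0$ and $\langle y-q,\,p-q\rangle\le0$. Adding these and regrouping gives $\langle (x-p)-(y-q),\,p-q\rangle\ge0$; substituting $p-q=(x-y)-\big((x-p)-(y-q)\big)$ turns this into $\langle (x-p)-(y-q),\,x-y\rangle\ge\|(x-p)-(y-q)\|^2$. Expanding $\|p-q\|^2=\big\|(x-y)-\big((x-p)-(y-q)\big)\big\|^2$ and inserting this estimate for the cross term yields $\|p-q\|^2\le\|x-y\|^2-\|(x-p)-(y-q)\|^2$; finally $(x-p)-(y-q)=-\big((P_X(x)-x)-(P_X(y)-y)\big)$, so the last norm is unchanged and (i) follows. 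The argument presents no genuine obstacle—existence and uniqueness of $P_X$ are built into the definition being used—so the only thing requiring care is the sign bookkeeping in the quadratic expansions of (ii) and (i); everything else is direct substitution.
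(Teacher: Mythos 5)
Your proof is correct, but it is genuinely different from what the paper does: the paper does not prove Proposition \ref{proj} at all, it simply cites Lemmas 1.1--1.2 of Zarantonello for items (i)--(ii) and Proposition 2.3 of Bauschke et al.\ for item (iii). Your argument is a self-contained elementary derivation, and the logical architecture is sound: (ii) follows from the first-order optimality condition of the quadratic $\phi(t)=\|x-P_X(x)-t(z-P_X(x))\|^2$ on $[0,1]$ (this is exactly the variational characterization of the projection); (iii) is the pointwise translation of (ii) into the normal-cone language, with the converse direction correctly handled via the expansion $\|x-y\|^2=\|x-u\|^2+2\langle x-u,u-y\rangle+\|u-y\|^2$ and uniqueness of the minimizer; and (i) is the standard ``firm nonexpansiveness'' computation, where adding the two instances of (ii) gives $\langle (x-p)-(y-q),\,p-q\rangle\ge 0$, hence $\langle w,x-y\rangle\ge\|w\|^2$ for $w=(x-p)-(y-q)$, and expanding $\|p-q\|^2=\|(x-y)-w\|^2$ yields the claim; the final sign remark ($w$ is the negative of the vector appearing in the statement, so the norms agree) is the right bookkeeping. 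What your route buys is that the proposition becomes independent of external references and works verbatim in any Hilbert space; what the paper's route buys is brevity, since these are classical facts. One small point of care: in (ii) you should note explicitly that $\phi(t)\ge\phi(0)$ for $t\in[0,1]$ only gives that the \emph{right} derivative at $0$ is nonnegative, which you do say --- and that is all that is needed since $\phi$ is a polynomial in $t$.
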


\begin{proof}
 (i) and (ii) see    Lemma    $1.1$    and    $1.2$    in    \cite{zarantonelo}. (iii) See Proposition $2.3$ in \cite{bauch}.
\end{proof}

 In the following we state some useful results on maximal monotone operators.
\begin{lemma}\label{bound}
Let $T:dom(T)\subseteq \RR^n\rightarrow 2^{\RR^n}$ be a maximal monotone operator.
Then,
\begin{enumerate}
\item $Gr(T)$ is closed.
\item $T$ is bounded on bounded subsets of the interior of
its domain.
\end{enumerate}
\end{lemma}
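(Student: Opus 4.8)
For part (i) the plan is to invoke maximality directly. I would take a sequence $(x_k,u_k)_{\kkk}$ in $Gr(T)$ with $(x_k,u_k)\to(x,u)$ and aim to show $(x,u)\in Gr(T)$. Since $T$ is maximal monotone, it suffices to check that $(x,u)$ is monotonically related to the entire graph, i.e.\ that $\la x-y,u-v\ra\ge 0$ for every $(y,v)\in Gr(T)$; indeed $Gr(T)\cup\{(x,u)\}$ is then a monotone set containing $Gr(T)$, so maximality forces $(x,u)\in Gr(T)$. To get the inequality, fix $(y,v)\in Gr(T)$: monotonicity of $T$ gives $\la x_k-y,u_k-v\ra\ge 0$ for every $k$, and passing to the limit (the inner product is continuous) yields $\la x-y,u-v\ra\ge 0$.

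For part (ii) the heart of the matter is a local boundedness statement: for each $\xb\in\operatorname{int}(\dom(T))$ there exist $\delta>0$ and $M>0$ with $\N{u}\le M$ whenever $\N{x-\xb}<\delta$ and $u\in T(x)$. I would argue by contradiction. If this fails, pick $x_k\to\xb$ and $u_k\in T(x_k)$ with $\N{u_k}\to\infty$; dropping finitely many terms we may assume $u_k\ne 0$, and since the unit sphere of $\RR^n$ is compact, after passing to a subsequence $z_k:=u_k/\N{u_k}\to z$ with $\N{z}=1$. Fix $r>0$ with $B[\xb,r]\subseteq\dom(T)$, and for each $w$ with $\N{w}\le r$ choose $v_w\in T(\xb+w)$. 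Monotonicity applied to $(x_k,u_k)$ and $(\xb+w,v_w)$ gives $\la u_k-v_w,\,x_k-\xb-w\ra\ge 0$, hence
\[
\la z_k,\,x_k-\xb-w\ra\ \ge\ \frac{1}{\N{u_k}}\la v_w,\,x_k-\xb-w\ra .
\]
Letting $k\to\infty$, the left-hand side tends to $\la z,-w\ra$ and the right-hand side tends to $0$ (its numerator stays bounded while $\N{u_k}\to\infty$), so $\la z,w\ra\le 0$ for all $\N{w}\le r$; taking $w=rz$ gives $r\le 0$, a contradiction. This is precisely where the hypothesis $\xb\in\operatorname{int}(\dom(T))$ enters, as it lets $v_w$ range over a full ball around $\xb$.

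To conclude part (ii), for a bounded set $\Omega$ with $\overline{\Omega}\subseteq\operatorname{int}(\dom(T))$ --- the situation in which the lemma gets used --- the set $\overline{\Omega}$ is compact, hence covered by finitely many balls $B(\xb_j,\delta_{\xb_j})$ supplied by the local boundedness claim, and $M:=\max_j M_{\xb_j}$ bounds $T$ on $\Omega$. The step I expect to be the real obstacle is the local boundedness claim itself --- specifically making the normalization $z_k=u_k/\N{u_k}$ and the test points $\xb+w$ interact to produce the contradiction; part (i) and the covering argument are routine.
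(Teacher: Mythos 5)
Your proposal is correct, but it is worth noting that the paper does not actually prove this lemma at all: it simply cites Proposition 4.2.1(ii) and Theorem 4.6.1(ii) of Burachik--Iusem. What you have written is essentially a self-contained reconstruction of those cited results, and both halves are the standard arguments: part (i) uses exactly the maximality characterization (a limit point of the graph is monotonically related to all of $Gr(T)$, hence lies in it), and part (ii) is Rockafellar's local boundedness argument via normalization $z_k=u_k/\N{u_k}$ and testing against points $\xb+w$ ranging over a ball inside the domain; the contradiction $r=\la z,rz\ra\le 0$ goes through as you describe, and note that only monotonicity (not maximality) is used there. The one point where you deviate from the literal statement is also the one place you are right to be cautious: "bounded on bounded subsets of the interior of the domain" is, as literally written, false without the extra hypothesis $\overline{\Omega}\subseteq\operatorname{int}(\dom(T))$ that you impose (consider $T=\partial f$ for $f(x)=-\sqrt{1-x^2}$ on $[-1,1]$, which is unbounded on the bounded set $(-1,1)=\operatorname{int}(\dom(T))$). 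Your restriction to sets whose closure stays in the interior, combined with the compactness/covering step, is exactly what makes the statement true and is how the result is used in the paper, so your version is arguably the more honest formulation; the trade-off is that your proof is longer than a citation but makes the actual content (and the needed hypothesis) visible.
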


\begin{proof}
\begin{enumerate}
\item See Proposition $4.2.1$(ii) in \cite{iusem-regina}.
\item Consequence of Theorem 4.6.1(ii) of in \cite{iusem-regina}.
\end{enumerate}
\end{proof}

\begin{proposition}\label{inversa}
Let $T:dom(T)\subseteq\RR^n \rightarrow 2^{\RR^n}$ be a point-to-set and maximal monotone operator. Given $\beta >0$ then the operator $(I+\beta\, T)^{-1}: \RR^n \rightarrow dom(T)$ is single valued and maximal monotone.
\end{proposition}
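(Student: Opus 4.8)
The plan is to verify the three asserted properties of the resolvent $J_\beta:=(I+\beta T)^{-1}$ — that it is single-valued, that its domain is all of $\RR^n$, and that it is maximal monotone — always working through the elementary equivalence
\[
 y\in J_\beta(x)\ \Longleftrightarrow\ \tfrac1\beta(x-y)\in T(y),
\]
which translates every statement about $J_\beta$ into a statement about pairs $\big(y,\tfrac1\beta(x-y)\big)\in\gr(T)$. I would first record that $\beta T$ is again maximal monotone for every $\beta>0$: it is clearly monotone, and a proper monotone extension of $\beta T$ would yield one of $T$; this lets me invoke monotonicity of $T$ freely in the scaled form.

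\emph{Single-valuedness and a Lipschitz bound.} If $y_1,y_2\in J_\beta(x)$, then $\tfrac1\beta(x-y_i)\in T(y_i)$ for $i=1,2$, and monotonicity of $T$ gives $\langle y_1-y_2,\ \tfrac1\beta(x-y_1)-\tfrac1\beta(x-y_2)\rangle\ge 0$, i.e.\ $-\tfrac1\beta\|y_1-y_2\|^2\ge 0$, so $y_1=y_2$. The same computation applied to two right-hand sides $x_1,x_2$ with $y_i=J_\beta(x_i)$ yields the firm-nonexpansiveness estimate $\langle J_\beta(x_1)-J_\beta(x_2),\ x_1-x_2\rangle\ge\|J_\beta(x_1)-J_\beta(x_2)\|^2$ on $\dom J_\beta$; in particular $J_\beta$ is monotone and $1$-Lipschitz wherever it is defined. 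Moreover $J_\beta(x)\in\dom(T)$ whenever it exists, since then $T(J_\beta(x))\neq\varnothing$, which accounts for the codomain $\dom(T)$ in the statement.

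\emph{The domain is all of $\RR^n$.} This is the substantive point, and the step I expect to be the main obstacle: it amounts to $\ran(I+\beta T)=\RR^n$, i.e.\ Minty's surjectivity theorem. I would show that $R:=\ran(I+\beta T)$ is nonempty, closed and open, and then use connectedness of $\RR^n$. Nonemptiness is immediate since $\dom(T)\neq\varnothing$: for any $(y,u)\in\gr(T)$ one has $y+\beta u\in R$. Closedness: if $x_k=y_k+\beta u_k$ with $u_k\in T(y_k)$ and $x_k\to x$, the firm-nonexpansiveness estimate forces $(y_k)$ to be Cauchy, so $y_k\to y$ and $u_k=\tfrac1\beta(x_k-y_k)\to\tfrac1\beta(x-y)$; since $\gr(T)$ is closed by Lemma \ref{bound}(i), $\tfrac1\beta(x-y)\in T(y)$, hence $x\in R$. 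Openness: for $x$ near a point $x_0=y_0+\beta u_0\in R$, one rewrites $x\in y+\beta T(y)$ as a fixed-point equation for a contraction built from the resolvent of a suitable translate of $T$, the contraction constant coming from the $1$-Lipschitz bound, and solves it via Banach's fixed point theorem, thereby covering a whole ball around $x_0$; this last piece (together with closedness and connectedness) is exactly the classical Minty argument, which one may alternatively simply cite.

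\emph{Maximal monotonicity.} At this point $J_\beta$ is a single-valued, everywhere-defined, monotone and continuous (indeed nonexpansive) operator on $\RR^n$, and such an operator is automatically maximal monotone. Indeed, if $(x,w)$ were monotonically related to $\gr(J_\beta)$ yet $w\neq J_\beta(x)$, then testing the monotonicity inequality at the points $y_t:=x+t\big(w-J_\beta(x)\big)$ as $t\downarrow 0$ and using continuity of $J_\beta$ would give $\|w-J_\beta(x)\|^2\le 0$, a contradiction. Hence $J_\beta$ admits no proper monotone extension, which completes the proof.
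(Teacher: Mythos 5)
Your proposal is correct in substance but takes a genuinely different route from the paper: the paper disposes of this proposition in one line by citing Minty's theorem (Theorem 4 of the reference \cite{minty}), whereas you attempt a self-contained proof, handling single-valuedness, firm nonexpansiveness, the codomain $\dom(T)$, and the deduction of maximality from continuity plus monotonicity plus full domain entirely by elementary computations with the equivalence $y=J_\beta(x)\Leftrightarrow \tfrac1\beta(x-y)\in T(y)$. All of those pieces are sound, and your observation that a single-valued, everywhere-defined, continuous monotone map on $\RR^n$ is automatically maximal (via the test points $y_t=x+t(w-J_\beta(x))$, $t\downarrow 0$) is a clean way to finish. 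The one place where your argument is not actually a proof is the surjectivity of $I+\beta T$: the ``openness'' step is sketched circularly, since building a contraction ``from the resolvent of a suitable translate of $T$'' presupposes that such a resolvent is everywhere defined, which is precisely the surjectivity in question; the genuine content there requires Brouwer's fixed point theorem, the Debrunner--Flor lemma, or a degree/variational argument. You acknowledge this and offer to cite the classical result instead, which collapses your proof to the same citation the paper uses. The net comparison: your route makes transparent exactly which parts of the proposition are elementary consequences of monotonicity and which part (surjectivity, i.e.\ $\dom J_\beta=\RR^n$) carries the real weight, at the cost of either importing Minty's theorem anyway or supplying a nontrivial topological ingredient that your sketch does not yet contain.
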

\begin{proof}
See Theorem $4$ in \cite{minty}.
\end{proof}
\begin{proposition}\label{parada}
Given $\beta>0$ and $A: dom(A)\subseteq \RR^n\to \RR^n$ be a monotone operator and $B: dom(B)\subseteq \RR^n\rightarrow 2^{\RR^n}$ be a maximal monotone operator, then
 $$x=(I+\beta B)^{-1}(I-\beta A)(x),$$ if and only if, $0\in (A+B)(x)$.
\end{proposition}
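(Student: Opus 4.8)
The proof of Proposition~\ref{parada} is a routine but characteristic computation linking fixed points of the forward-backward operator with zeros of the sum, so the plan is to unwind the definition of the resolvent $(I+\beta B)^{-1}$ using Proposition~\ref{proj}(iii) (or rather its generalization to maximal monotone operators) and Proposition~\ref{inversa}, which guarantees the resolvent is single valued and thus the equation $x=(I+\beta B)^{-1}(I-\beta A)(x)$ makes sense.

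First I would assume $x=(I+\beta B)^{-1}(I-\beta A)(x)$. By definition of the inverse operator, this is equivalent to $(I-\beta A)(x)\in (I+\beta B)(x)$, that is, $x-\beta A(x)\in x+\beta B(x)$. Subtracting $x$ and dividing by $\beta>0$ gives $-A(x)\in B(x)$, i.e. $0\in A(x)+B(x)$, which is precisely $0\in(A+B)(x)$. Each of these steps is reversible: starting from $0\in(A+B)(x)$ we get $-\beta A(x)\in\beta B(x)$, add $x$ to both sides to obtain $x-\beta A(x)\in x+\beta B(x)=(I+\beta B)(x)$, hence $x\in(I+\beta B)^{-1}(x-\beta A(x))=(I+\beta B)^{-1}(I-\beta A)(x)$, and since the resolvent is single valued by Proposition~\ref{inversa} this means $x=(I+\beta B)^{-1}(I-\beta A)(x)$.

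There is essentially no hard part here; the only subtlety to be careful about is the single-valuedness of $(I+\beta B)^{-1}$, which is why Proposition~\ref{inversa} is invoked, so that the statement ``$x$ equals the resolvent'' is equivalent to ``$x$ belongs to the resolvent'' rather than merely implied by it. One should also note that $x\in\dom(A)\cap\dom(B)$ is implicit on both sides of the equivalence (the left side forces $x\in\ran(I+\beta B)^{-1}=\dom(B)$ and the evaluation of $A$ at $x$ forces $x\in\dom(A)$; the right side, $0\in A(x)+B(x)$, likewise requires $x\in\dom(A)\cap\dom(B)$), so no domain issues obstruct the argument.
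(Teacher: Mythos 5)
Your proof is correct and complete. Note that the paper itself does not prove this proposition at all: it simply cites Proposition 3.13 of Eckstein's thesis \cite{PhD-E}, so there is no in-paper argument to compare against. Your computation is the standard one that the cited source contains: unwinding $x=(I+\beta B)^{-1}(I-\beta A)(x)$ to $x-\beta A(x)\in x+\beta B(x)$, cancelling $x$ and dividing by $\beta>0$ to get $-A(x)\in B(x)$, and reversing the steps for the converse, with Proposition~\ref{inversa} (single-valuedness of the resolvent) used to upgrade ``$x$ belongs to the resolvent's value'' to ``$x$ equals the resolvent's value.'' Your remark about the implicit domain requirement $x\in\dom(A)\cap\dom(B)$ is a nice touch, since the paper is somewhat casual about domains; in fact single-valuedness of $(I+\beta B)^{-1}$ already follows from monotonicity of $B$ alone, with maximality only needed to make the resolvent everywhere defined, but invoking Proposition~\ref{inversa} as you do is perfectly adequate.
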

\begin{proof}
See Proposition $3.13$ in \cite{PhD-E}.
\end{proof}

\noindent Now we define the so called Fej\'er convergence.
\begin{definition}
Let $S$ be a nonempty subset of $\RR^n$. The sequence $(x^k)_{k\in \NN}\subset \RR^n$ is said to be Fej\'er convergent to $S$, if and only if, for all $x\in S$ there exists $k_0\ge 0$, such that $\|x^{k+1}-x\| \le \|x^k - x\|$ for all $k\ge k_0$.
\end{definition}

This definition was introduced in \cite{browder} and have been
further elaborated in \cite{IST} and \cite{borw-baus}. A useful result on Fej\'er sequences is the following.

\begin{proposition}\label{punto}
If $(x^k)_{k\in \NN}$ is Fej\'er convergent to $S$, then:
\begin{enumerate}
\item the sequence $(x^k)_{k\in \NN}$ is bounded;
\item the sequence $(\|x^k-x\|)_{k\in \NN}$ is convergent for all $x\in S;$
\item if a cluster point $x^*$ belongs to $S$, then the sequence $(x^k)_{k\in \NN}$ converges to $x^*$.
\end{enumerate}
\end{proposition}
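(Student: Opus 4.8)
The plan is to prove the three items in the order stated, each one leaning on the previous, since everything reduces to elementary facts about monotone real sequences once the defining inequality is exploited.

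For item (i), I would fix an arbitrary $x\in S$ (using that $S\neq\emp$) and invoke the definition of Fej\'er convergence to obtain an index $k_0$ with $\|x^{k+1}-x\|\le\|x^k-x\|$ for all $k\ge k_0$. Thus the tail $(\|x^k-x\|)_{k\ge k_0}$ is nonincreasing and hence bounded above by $\|x^{k_0}-x\|$; the finitely many initial iterates $x^0,\dots,x^{k_0-1}$ are trivially bounded. Setting $R:=\max\{\|x^0-x\|,\dots,\|x^{k_0-1}-x\|,\|x^{k_0}-x\|\}$ gives $\|x^k-x\|\le R$ for every $k$, so $(x^k)_{\kkk}$ lies in the closed ball of radius $R$ about $x$ and is bounded.

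For item (ii), I would fix $x\in S$, again take the $k_0$ furnished by the definition, and note that $(\|x^k-x\|)_{k\ge k_0}$ is nonincreasing and bounded below by $0$; by the monotone convergence theorem for sequences in $\RR$ it converges, and prepending the finitely many terms indexed below $k_0$ does not change convergence. Hence $(\|x^k-x\|)_{\kkk}$ converges for every $x\in S$.

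For item (iii), suppose $x^*$ is a cluster point of $(x^k)_{\kkk}$ with $x^*\in S$. Applying item (ii) with $x=x^*$, the sequence $(\|x^k-x^*\|)_{\kkk}$ converges to some $\ell\ge 0$. Because $x^*$ is a cluster point, there is a subsequence $(x^{k_j})_{j\in\NN}$ with $x^{k_j}\to x^*$, so $\|x^{k_j}-x^*\|\to 0$, forcing $\ell=0$. Consequently $\|x^k-x^*\|\to 0$, i.e.\ $x^k\to x^*$. There is no real obstacle here beyond careful bookkeeping: the only point to watch is that the threshold $k_0$ in the definition depends on the chosen point of $S$, so each item must first be argued on a tail and then extended to the full sequence.
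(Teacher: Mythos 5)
Your proof is correct. The paper does not actually prove this proposition; it simply cites Proposition 5.4 and Theorem 5.5 of \cite{librobauch}, and your argument is precisely the standard elementary one underlying those results: a nonincreasing, nonnegative tail of $(\|x^k-x\|)_{k\in\NN}$ gives boundedness and convergence of the distance sequence, and a cluster point in $S$ forces that limit to be $0$. Your remark that the threshold $k_0$ depends on the chosen $x\in S$ (so each claim is first established on a tail) is exactly the right point of care, and your use of $S\neq\varnothing$ in item (i) is justified since the paper's definition of Fej\'er convergence requires $S$ nonempty.
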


\begin{proof}
(i) and (ii) See Proposition $5.4$ in \cite{librobauch}. (iii) See Theorem $5.5$ in \cite{librobauch}.
\end{proof}

\section{The Algorithm}\label{section3}
Let $A_i:\dom(A_i)\subset\RR^n \rightarrow \RR^n$ and $B_i:\dom(B_i)\subset\RR^n\rightarrow 2^{\RR^n} $ be maximal monotone operators, with $A_i$ point-to-point and $B_i$ point-to-set, for all $i\in \mathbb{I}$. we assume that:
\begin{enumerate}[leftmargin=0.5in, label=({\bf A\arabic*})]

\item\label{a1} $dom (B_i)\subseteq dom (A_i)$, for all $i \in \mathbb{I}:=\{1,2,3, \cdots, m\}$ with $m\in \NN$.
\item \label{a2} $S_*\ne \emptyset$.
\item\label{a3} For each bounded subset $V \subset \cap_{i=1}^m dom(B_i)$ there exists $R>0$, such that $B_i(x)\cap B[0,R]\neq\emptyset$,  for all $x\in V$ and $i\in \mathbb{I}$.
\end{enumerate}
 Where $B[0,R]$ is the closed ball centered in $0$ and radius $R$. We emphasize that this assumption holds trivially if $dom(B_i)=\RR^n$ or $V\subset int(dom(B_i))$ or $B_i$ is the normal cone in any subset of $dom(B_i)$ for all $i\in\mathbb{I}$, i.e., in the application to system of variational inequality problem, this assumption is not necessary.

Choose any nonempty, closed and convex set, $X \subseteq \cap_{i\in \mathbb{I}}dom (B_i)$, satisfying  $X\cap S_*\ne \emptyset$, The explanation for the chosen of $X$ can be found in \cite{tseng, rei-yun, phdthesis}.
Let $(\beta_k)_{k=0}^{\infty}$ be a sequence such that $(\beta_k)_{k\in \NN}\subseteq [\check{\beta},\hat{\beta}] $ with $0<\check{\beta} \leq \hat{\beta}<\infty$, and $\theta, \delta\in(0,1)$, let $R>0$ like Assumption \ref{a3}. The algorithm is defined as follows:

\begin{center}
\fbox{\begin{minipage}[b]{\textwidth}
\begin{Calg}{A}\label{concep} Let $(\beta_k)_{k\in \NN}, \theta, \delta, R \mbox{ and } \mathbb{I}$ like above.
\begin{retraitsimple}
\item[] {\bf Step~0 (Initialization):} Take $x^0\in X$.

\item[] {\bf Step~1 (Iterative Step 1):} Given $x^k$, define $z_1^k:=x^k$. Begin the process:  for $i=1$ to $m$ do
\begin{equation}{\label{jota}}
J_i^{k}:=(I+\beta_{k}B_i)^{-1}(I-\beta_{k}A_i)(z_i^{k}).
\end{equation}
If $z_i^k=J_i^k$ put $i\in \mathbb{I}^*_k$ set $ z_{i+1}^k=z_i^k$ and goto {\bf Step 1}.

{\bf Stopping Criteria} If $\mathbb{I}_k^*=\mathbb{I}$ , then $x^k \in S_*$.
\item[] {\bf Step~1.1 (Inner Loop):} Begin the inner loop over $j$.
 Put $j=0$ and choose any \newline $u_{(j,i)}^{k}\in B_i\big(\theta^{j}J_i^k+(1-\theta^{j})z_i^k\big)\cap B[0,R]$. If
\begin{equation}\label{jk}
\Big \la A_i \big(\theta^{j}J_i^k+(1-\theta^{j})z^k_i\big)+u^{k}_{(j,i)}, z^k_i-J_i^k\Big \ra\geq \frac{\delta}{\beta_k}\|z_i^k -J_i^k\|^2,
\end{equation}
then $j_i(k):=j$ and stop.
Else, $j=j+1$.
 Define:
\begin{equation}{\label{alphak}}
\alpha_{k,i}:=\theta^{j_i(k)},
\end{equation}
\begin{equation}{\label{ubar}}
\bar{u}_i^k:=u^k_{j_i(k)}
\end{equation}
\begin{equation}{\label{xbar}}
\bar{x}_i^k:=\alpha_{k,i} J_i^k+(1-\alpha_{k,i})x^k
\end{equation}
\begin{equation}{\label{Fk}}
z_{i+1}^{k}=P_X\big(P_{H_i(\bar{x}_{i}^{k},\bar{u}_{i}^{k})}(z_{i}^{k})\big).
\end{equation}
\item[] {\bf Step~2 (Iterative Step 2):} Define:
\begin{equation}\label{xk1}
x^{k+1}:=z^{k}_{m+1},
\end{equation}
set $k=k+1$, empty $\mathbb{I}^*_k$ and goto {\bf Step 1}.
\end{retraitsimple}
\end{Calg}\end{minipage}}
\end{center}

where
\begin{equation}\label{H(x)}
H_i(x,u) := \big\{ y\in \RR^n :\la A_i(x)+u,y-x\ra\le 0\big \}
\end{equation}
This method combine the Alternating Projection Method, the Forward-Backward Method and the ideas of separating hyperplane.

\section{Convergence Analysis}\label{section4}
In this section we analyze the convergence of the algorithms presented in the previous section. First, we present some general properties as well as prove the well-definition of the algorithm.

\begin{lemma}\label{propseq}
For all $(x,u)\in Gr(B_i)$, $S_*^i\subseteq H_i(x,u)$, for all $i\in \mathbb{I}$. Therefore $S_* \subset H_i(x,u)$ for all $i \in \mathbb{I}$.
\end{lemma}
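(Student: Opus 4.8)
The statement is a direct consequence of monotonicity of the two operators, so the plan is short. Fix $i\in\mathbb{I}$ and an arbitrary pair $(x,u)\in Gr(B_i)$, and let $x^*\in S_*^i$; the goal is to show $x^*\in H_i(x,u)$, i.e.\ $\la A_i(x)+u, x^*-x\ra\le 0$. First I would translate the membership $x^*\in S_*^i$ into a graph statement: since $0\in A_i(x^*)+B_i(x^*)$, the vector $-A_i(x^*)$ lies in $B_i(x^*)$, so $\big(x^*,-A_i(x^*)\big)\in Gr(B_i)$.

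Next I would write down the two monotonicity inequalities at the points $x^*$ and $x$. Monotonicity of $B_i$ applied to the pairs $\big(x^*,-A_i(x^*)\big)$ and $(x,u)$ gives
\[
\la x^*-x,\, -A_i(x^*)-u\ra \ge 0 .
\]
Monotonicity of the point-to-point operator $A_i$ gives
\[
\la x^*-x,\, A_i(x^*)-A_i(x)\ra \ge 0 .
\]
Adding the two inequalities makes the $A_i(x^*)$ terms cancel, leaving $\la x^*-x,\, -u-A_i(x)\ra\ge 0$, i.e.\ $\la A_i(x)+u,\, x^*-x\ra\le 0$. This is exactly the defining inequality of $H_i(x,u)$ in \eqref{H(x)}, so $x^*\in H_i(x,u)$, proving $S_*^i\subseteq H_i(x,u)$.

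Finally, for the second assertion I would simply recall that $S_*=\cap_{i\in\mathbb{I}}S_*^i$, so $S_*\subseteq S_*^i\subseteq H_i(x,u)$ for every $i\in\mathbb{I}$. There is really no serious obstacle here; the only point to be careful about is that the cancellation step genuinely uses monotonicity of $A_i$ (not just of $B_i$), which is available by hypothesis, and that the pair $\big(x^*,-A_i(x^*)\big)$ is admissible in the monotonicity inequality for $B_i$ precisely because $x^*\in S_*^i$.
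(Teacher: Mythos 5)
Your proof is correct and follows essentially the same route as the paper: the paper invokes monotonicity of the sum $A_i+B_i$ applied to the pairs $(x,A_i(x)+u)$ and $(x^*,A_i(x^*)+v^*)$ with $v^*=-A_i(x^*)$, which is exactly your two separate monotonicity inequalities added together. Your version merely makes the decomposition explicit, and the final step $S_*=\cap_{i\in\mathbb{I}}S_*^i\subseteq H_i(x,u)$ matches the paper's conclusion.
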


\begin{proof}
Take $x^{*}\in S_*^i$. Using the definition of the solution, there exists $v^{*}\in B_i(x^{*})$, such that $0=A_i(x^{*})+v^{*}$. By the monotonicity of $A_i+B_i$, we have
$$\la A_i(x)+u -(A_i(x^{*})+v^{*}), x-x^{*}\ra\ge 0, $$
 for all $(x,u)\in Gr(B_i)$.
Hence,
$$\la A_i(x)+u, x^{*}-x\ra \le 0$$
and by \eqref{H(x)}, $x^{*}\in H_i(x,u)$.
\end{proof}

\noindent From now on, $(x^k)_{k\in \NN}$ is the sequence generated by the algorithm.
\begin{proposition}\label{propdef}
 The algorithm  is well-defined.
\end{proposition}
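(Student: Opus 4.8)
The plan is to show that the algorithm is well-defined by verifying that every quantity it produces exists and lies in the right set. The only genuinely non-trivial point is the termination of the inner loop of Step~1.1: we must prove that for each $i\in\mathbb I$ with $z_i^k\neq J_i^k$, there exists a finite $j$ for which the Armijo-type inequality \eqref{jk} holds. Everything else is routine: $J_i^k$ in \eqref{jota} is well-defined and single-valued by Proposition~\ref{inversa} (applied to $\beta_k B_i$, noting $\beta_k>0$), and it lies in $\dom(B_i)\subseteq\dom(A_i)$ by \ref{a1}, so $A_i(\theta^j J_i^k+(1-\theta^j)z_i^k)$ and the choice of $u_{(j,i)}^k$ in $B_i(\cdot)\cap B[0,R]$ make sense for every $j$ once we observe that the convex combinations $\theta^j J_i^k+(1-\theta^j)z_i^k$ stay in the convex set $X\subseteq\cap_i\dom(B_i)$ (since $z_i^k,J_i^k\in X$; this needs an inductive check that $z_i^k\in X$, which follows because $z_1^k=x^k\in X$ and each subsequent $z_{i+1}^k$ is an image of $P_X$ by \eqref{Fk}) and the existence of a point of $B_i(\cdot)$ inside $B[0,R]$ is guaranteed by Assumption~\ref{a3} with the bounded set $V=X$ if $X$ is bounded, or more carefully by applying \ref{a3} to a bounded set containing all the iterates constructed so far. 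Finally, the projections $P_{H_i(\bar x_i^k,\bar u_i^k)}$ and $P_X$ in \eqref{Fk} are well-defined because $H_i(\bar x_i^k,\bar u_i^k)$ is a closed half-space (or all of $\RR^n$) and $X$ is nonempty, closed, convex; and the stopping criterion is consistent because if $z_i^k=J_i^k$ then by Proposition~\ref{parada} we get $0\in(A_i+B_i)(z_i^k)$, so $\mathbb I_k^*=\mathbb I$ really does imply $x^k=z_i^k\in S_*$ for the relevant index.

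For the termination of the inner loop, I would argue by contradiction. Suppose that for some fixed $k$ and $i$ we have $z_i^k\neq J_i^k$ but \eqref{jk} fails for every $j\in\NN$, i.e.
$$
\Big\la A_i\big(\theta^j J_i^k+(1-\theta^j)z_i^k\big)+u_{(j,i)}^k,\ z_i^k-J_i^k\Big\ra<\frac{\delta}{\beta_k}\|z_i^k-J_i^k\|^2
\qquad\text{for all }j.
$$
As $j\to\infty$, $w^j:=\theta^j J_i^k+(1-\theta^j)z_i^k\to z_i^k$. Since $\{w^j\}\subseteq X$ is bounded and $u_{(j,i)}^k\in B_i(w^j)\cap B[0,R]$, the sequence $(u_{(j,i)}^k)_j$ is bounded, so along a subsequence $u_{(j,i)}^k\to \bar u$; by closedness of $\gr(B_i)$ (Lemma~\ref{bound}(i)) we get $\bar u\in B_i(z_i^k)$. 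Passing to the limit in the displayed inequality gives
$$
\big\la A_i(z_i^k)+\bar u,\ z_i^k-J_i^k\big\ra\le\frac{\delta}{\beta_k}\|z_i^k-J_i^k\|^2 ,
$$
using continuity of $A_i$ on the interior of its domain, or at least upper semicontinuity along the segment — this is where I would need to invoke that $A_i$ is maximal monotone and single-valued, hence continuous on $\dom(B_i)\subseteq\dom(A_i)$ where the relevant points live. On the other hand, the definition $J_i^k=(I+\beta_k B_i)^{-1}(I-\beta_k A_i)(z_i^k)$ means $(I-\beta_k A_i)(z_i^k)\in(I+\beta_k B_i)(J_i^k)$, i.e. $z_i^k-\beta_k A_i(z_i^k)-J_i^k\in\beta_k B_i(J_i^k)$; set $v:=\beta_k^{-1}(z_i^k-J_i^k)-A_i(z_i^k)\in B_i(J_i^k)$. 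Monotonicity of $A_i+B_i$ applied to the pairs $(z_i^k,\bar u)$ and $(J_i^k,v)$ of $\gr(A_i+B_i)$ yields
$$
\big\la (A_i(z_i^k)+\bar u)-(A_i(J_i^k)+v),\ z_i^k-J_i^k\big\ra\ge 0 .
$$
Substituting $v$ and rearranging, one obtains $\la A_i(z_i^k)+\bar u,\ z_i^k-J_i^k\ra\ge\frac{1}{\beta_k}\|z_i^k-J_i^k\|^2$ — here I am suppressing a short computation in which the term $\la A_i(J_i^k)-A_i(z_i^k),z_i^k-J_i^k\ra$ is absorbed using monotonicity of $A_i$. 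Combining with the limiting inequality above gives $\frac{1}{\beta_k}\|z_i^k-J_i^k\|^2\le\frac{\delta}{\beta_k}\|z_i^k-J_i^k\|^2$, and since $\delta\in(0,1)$ this forces $z_i^k=J_i^k$, contradicting our hypothesis.

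The main obstacle, and the step to treat with care, is the limiting argument for the inner loop: one needs the continuity (or at least sequential closedness of the graph) of $A_i$ at $z_i^k$ along the segment $[z_i^k,J_i^k]$, which is delivered by maximal monotonicity of the single-valued $A_i$ together with the domain assumption \ref{a1}, and one needs the uniform bound on $u_{(j,i)}^k$, which is exactly what Assumption~\ref{a3} (and the explicit intersection with $B[0,R]$ in Step~1.1) is designed to provide. Once these two ingredients are in place, the contradiction is immediate from $\delta<1$. I would also explicitly record the inductive claim "$z_i^k\in X$ for all $i$" at the start, since it is used repeatedly to keep the convex combinations inside $X$ and hence inside $\cap_i\dom(B_i)$.
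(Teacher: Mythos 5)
Your overall strategy coincides with the paper's: argue by contradiction that the inner loop terminates, use the bound $u_{(j,i)}^k\in B[0,R]$ to extract a convergent subsequence, identify the limit $\bar u$ as an element of $B_i(z_i^k)$ via closedness of the graph, pass to the limit in the negated Armijo inequality, read off $v\in B_i(J_i^k)$ from the resolvent identity, and derive a contradiction with $\delta<1$. You are in fact more careful than the paper about the routine points (single-valuedness of the resolvent, $z_i^k\in X$ by induction, the role of \ref{a3}), which the paper passes over in silence.

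There is, however, one step that fails as written: the derivation of
$\la A_i(z_i^k)+\bar u,\,z_i^k-J_i^k\ra\ge\tfrac{1}{\beta_k}\|z_i^k-J_i^k\|^2$
from the monotonicity of $A_i+B_i$. Expanding your inequality
$\la (A_i(z_i^k)+\bar u)-(A_i(J_i^k)+v),\,z_i^k-J_i^k\ra\ge 0$
with $v=\beta_k^{-1}(z_i^k-J_i^k)-A_i(z_i^k)$ gives
$\la A_i(z_i^k)+\bar u,\,z_i^k-J_i^k\ra\ \ge\ \tfrac{1}{\beta_k}\|z_i^k-J_i^k\|^2+\la A_i(J_i^k)-A_i(z_i^k),\,z_i^k-J_i^k\ra$,
and the cross term you propose to ``absorb'' satisfies $\la A_i(J_i^k)-A_i(z_i^k),\,z_i^k-J_i^k\ra\le 0$ by monotonicity of $A_i$ --- it points in the \emph{wrong} direction, so it cannot be discarded and the desired lower bound does not follow. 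The fix is immediate and is exactly what the paper does: use the monotonicity of $B_i$ alone, i.e.\ $\la \bar u-v,\,z_i^k-J_i^k\ra\ge 0$ for $\bar u\in B_i(z_i^k)$, $v\in B_i(J_i^k)$. Then
$\la A_i(z_i^k)+\bar u,\,z_i^k-J_i^k\ra\ \ge\ \la A_i(z_i^k)+v,\,z_i^k-J_i^k\ra\ =\ \tfrac{1}{\beta_k}\|z_i^k-J_i^k\|^2$,
because the $A_i(z_i^k)$ terms cancel exactly. With this substitution your argument is complete and agrees with the paper's proof.
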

\begin{proof}
 The proof of the well-definition of $j_i(k)$ is by contradiction. Fix $i\in \mathbb{I}\setminus \mathbb{I}_k^*$ and assume that for all $j\ge0$ having chosen $u_{(j,i)}^{k}\in B_i\big(\theta^j J_i^k+(1-\theta^j)z_i^k\big)\cap B[0,R]$,
\begin{equation*}
\Big\la A_i \big(\theta^{j}J_i^k+(1-\theta^{j})z_i^k\big)+u^{k}_{j}, z_i^k-J_i^k\Big\ra < \frac{\delta}{\beta_k}\|z_i^k - J_i^k\|^2.
\end{equation*}
Since the sequence $\{u^{k}_{(j,i)}\}_{j=0}^{\infty}$ is bounded, there exists a subsequence $\{u^{k}_{(\ell_j,i)}\}_{j=0}^{\infty}$ of $\{u^{k}_{(j,i)}\}_{j=0}^{\infty}$, which converges to an element $ u_i^k$ belonging to $B_i(z_i^k)$ by maximality. Taking the limit over the subsequence $\{\ell_j\}_{j\in \NN}$, we get
\begin{equation}{\label{lim}}
\big\la\beta_k A_i(z_i^k)+\beta_k u_i^k, z_i^k -J_i^k\big \ra \le \delta \|z_i^k - J_i^k\|^2.
\end{equation}
It follows from (\ref{jota}) that
\begin{equation*}{\label{res}}
 \beta_k A_i(z_i^k)=z_i^k-J_i^k-\beta_k v_i^k,
 \end{equation*}
 for some  $ v_i^{k}\in B_i(J_i^k)$.\\
Now, the above equality together with (\ref{lim}), lead to
\begin{equation*}
\|z_i^k - J_i^k\|^2\le\Big\la z_i^k-J_i^k-\beta_k v_i^k+\beta_k u_i^k, z_i^k -J_i^k\Big \ra \le \delta \|z_i^k - J_i^k\|^2,
\end{equation*}
using the monotonicity of $B_i$ for the first inequality. So,
$$(1-\delta)\|z_i^k - J_i^k\|^2\le 0,$$
which contradicts that $i\in \mathbb{I}\setminus \mathbb{I}_k^*$. Thus, the algorithm is well-defined.
\end{proof}

Finally, a useful algebraic property on the sequence generated by the algorithm, which is a direct consequence of the inner loop and \eqref{xbar}.
\begin{corollary}\label{coro}
Let $(x^k)_{k\in \NN}$, $(\beta_k)_{k\in \NN}$ and $(\alpha_{(k,i)})_{k\in \NN}$ be sequences generated by the algorithm. With $\delta$ and $\hat{\beta}$ as in the algorithm. Then,
\begin{equation}\label{desig-muy-usada}
\la A_i(\bar{x}_i^{k})+\bar{u}_i^{k},z_i^{k}-\bar{x}_i^{k} \ra  \ge\frac{\alpha_{k,i}\delta}{\hat{\beta}}\|z_i^{k}-J_i^k)\|^2\geq0,
\end{equation}
for all $k$.
\end{corollary}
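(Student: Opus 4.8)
The plan is to obtain \eqref{desig-muy-usada} directly from the exit condition \eqref{jk} of the inner loop together with the defining relations \eqref{alphak}, \eqref{ubar} and \eqref{xbar}; no monotonicity or limiting argument is needed beyond what was already used to prove Proposition \ref{propdef}. Fix $k$ and $i\in\mathbb{I}$. If $i\in\mathbb{I}_k^*$, i.e. $z_i^k=J_i^k$, then reading the barred quantities as $\bar x_i^k=z_i^k=J_i^k$, both sides of \eqref{desig-muy-usada} vanish, so there is nothing to do; hence assume $i\in\mathbb{I}\setminus\mathbb{I}_k^*$, so that $j_i(k)$ is well-defined by Proposition \ref{propdef} and $\alpha_{k,i}=\theta^{j_i(k)}\in(0,1]$.

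First I would record the convex-combination identity coming from \eqref{xbar}: since $\bar x_i^k=\alpha_{k,i}J_i^k+(1-\alpha_{k,i})z_i^k$, we get $z_i^k-\bar x_i^k=\alpha_{k,i}\bigl(z_i^k-J_i^k\bigr)$. In particular $\bar x_i^k=\theta^{j_i(k)}J_i^k+(1-\theta^{j_i(k)})z_i^k$, so $A_i(\bar x_i^k)$ is exactly the operator value appearing in \eqref{jk} at $j=j_i(k)$, and by \eqref{ubar} the vector $\bar u_i^k$ is exactly $u^k_{(j_i(k),i)}\in B_i(\bar x_i^k)\cap B[0,R]$, the vector appearing there as well. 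Thus the pair $\bigl(A_i(\bar x_i^k),\bar u_i^k\bigr)$ is precisely the pair for which \eqref{jk} holds at $j=j_i(k)$, namely $\la A_i(\bar x_i^k)+\bar u_i^k,\,z_i^k-J_i^k\ra\ge\frac{\delta}{\beta_k}\|z_i^k-J_i^k\|^2$.

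Then I would multiply this accepted inequality by $\alpha_{k,i}>0$. The left-hand side becomes $\alpha_{k,i}\la A_i(\bar x_i^k)+\bar u_i^k,\,z_i^k-J_i^k\ra=\la A_i(\bar x_i^k)+\bar u_i^k,\,\alpha_{k,i}(z_i^k-J_i^k)\ra=\la A_i(\bar x_i^k)+\bar u_i^k,\,z_i^k-\bar x_i^k\ra$ by the identity of the previous paragraph, while the right-hand side becomes $\frac{\alpha_{k,i}\delta}{\beta_k}\|z_i^k-J_i^k\|^2$. Finally, using $\beta_k\le\hat\beta$ (hence $1/\beta_k\ge1/\hat\beta$) together with $\alpha_{k,i}\delta\ge0$, I bound $\frac{\alpha_{k,i}\delta}{\beta_k}\|z_i^k-J_i^k\|^2\ge\frac{\alpha_{k,i}\delta}{\hat\beta}\|z_i^k-J_i^k\|^2\ge0$; chaining the displays yields \eqref{desig-muy-usada}.

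I do not expect a genuine obstacle: the corollary is a bookkeeping restatement of the linesearch exit test, as its own statement already advertises. The only point deserving attention is consistency of notation — namely that the base point of the convex combination in \eqref{xbar} is the same $z_i^k$ that sits inside $A_i$ and $B_i$ in \eqref{jk}, which is exactly what makes $z_i^k-\bar x_i^k=\alpha_{k,i}(z_i^k-J_i^k)$ valid and makes the left side of \eqref{jk} equal $\alpha_{k,i}^{-1}\la A_i(\bar x_i^k)+\bar u_i^k,\,z_i^k-\bar x_i^k\ra$.
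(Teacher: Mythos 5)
Your proposal is correct and is exactly the argument the paper intends: the corollary is stated as ``a direct consequence of the inner loop and \eqref{xbar}'', and you have simply written out that consequence — evaluate \eqref{jk} at $j=j_i(k)$, multiply by $\alpha_{k,i}$, use $z_i^k-\bar x_i^k=\alpha_{k,i}(z_i^k-J_i^k)$, and bound $1/\beta_k\ge 1/\hat\beta$. You also rightly flag that \eqref{xbar} as printed uses $x^k$ rather than $z_i^k$ as the base point; the corollary (and the rest of the convergence analysis) only goes through under the reading $\bar x_i^k=\alpha_{k,i}J_i^k+(1-\alpha_{k,i})z_i^k$, which is evidently the intended one since $z_1^k=x^k$.
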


\begin{proposition}\label{stop1}
If the algorithm stops, then $x^k\in S_*$.
\end{proposition}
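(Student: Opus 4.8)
The plan is to show that if the algorithm stops, it does so only through the Stopping Criteria in Step~1, which triggers precisely when $\mathbb{I}_k^* = \mathbb{I}$; and that $i \in \mathbb{I}_k^*$ forces $x^k \in S_*^i$. So the argument reduces to a single observation about the fixed-point characterization in Proposition~\ref{parada}.

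First I would recall the structure of the iteration. For a fixed $k$, the algorithm processes $i = 1, \dots, m$, starting from $z_1^k := x^k$. At each $i$ it computes $J_i^k := (I+\beta_k B_i)^{-1}(I-\beta_k A_i)(z_i^k)$. If $z_i^k = J_i^k$, the index $i$ is placed in $\mathbb{I}_k^*$ and $z_{i+1}^k := z_i^k$ is set without modification. The only way the algorithm halts is when, after cycling through all indices, $\mathbb{I}_k^* = \mathbb{I}$, i.e., $z_i^k = J_i^k$ for every $i \in \mathbb{I}$.

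Next I would unwind what $z_i^k = J_i^k$ for all $i$ means. Since each time an index lands in $\mathbb{I}_k^*$ the running point is left unchanged ($z_{i+1}^k = z_i^k$), and since we are assuming \emph{every} index lands in $\mathbb{I}_k^*$, none of the points $z_i^k$ is ever altered in this outer pass; hence $z_i^k = z_1^k = x^k$ for all $i \in \mathbb{I}$. Therefore the condition reads $x^k = J_i^k = (I+\beta_k B_i)^{-1}(I-\beta_k A_i)(x^k)$ for every $i \in \mathbb{I}$. By Proposition~\ref{parada} (applied with $\beta = \beta_k > 0$, $A = A_i$ monotone, $B = B_i$ maximal monotone), this is equivalent to $0 \in A_i(x^k) + B_i(x^k)$, i.e., $x^k \in S_*^i$. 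Taking the intersection over $i \in \mathbb{I}$ gives $x^k \in \cap_{i\in\mathbb{I}} S_*^i = S_*$, which is the claim.

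I do not anticipate a serious obstacle here: the statement is essentially a bookkeeping consequence of how $\mathbb{I}_k^*$ is populated together with the already-proved equivalence in Proposition~\ref{parada}. The one point deserving a line of care is the remark that when $i$ enters $\mathbb{I}_k^*$ the algorithm sets $z_{i+1}^k = z_i^k$ and moves on, so that a full sweep with $\mathbb{I}_k^* = \mathbb{I}$ indeed leaves $z_i^k \equiv x^k$; this is what lets us pass from "$z_i^k = J_i^k$ for all $i$" to "$x^k = (I+\beta_k B_i)^{-1}(I-\beta_k A_i)(x^k)$ for all $i$" and then invoke Proposition~\ref{parada}.
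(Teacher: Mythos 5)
Your proposal is correct and follows the same route as the paper: the stopping criterion $\mathbb{I}_k^*=\mathbb{I}$ combined with Proposition~\ref{parada} gives $x^k\in S_*^i$ for every $i$, hence $x^k\in S_*$. The only difference is that you spell out the bookkeeping detail that $z_i^k=x^k$ for all $i$ when every index enters $\mathbb{I}_k^*$, which the paper leaves implicit.
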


\begin{proof}
If Stop Criteria is satisfied, then $\mathbb{I}_k^*=\mathbb{I}$ then, by Proposition \ref{parada} we have that $x^k \in S_*^i$ for all $i\in \mathbb{I}$ which imply that $x^k\in S_*$.
 \end{proof}

From now on assume that the algorithm generate an infinite sequence $(x^k)_{k\in\NN}$.
\begin{proposition}\label{prop2}
\begin{enumerate}
\item The sequence $(x^k)_{k\in \NN}$ is Fej\'er convergent to $S_*\cap X$.
\item The sequence $(x^k)_{k\in \NN}$ is bounded.
\item For all $x^*\in S_*\cap X$ we have $\lim_{k\to \infty}\|z_j^k-x^*\|^2$ exist for all $j\in \mathbb{I}$ and satisfy that  $\lim_{k\to \infty}\|z_j^k-x^*\|^2$=$\lim_{k\to \infty}\|z_i^k-x^*\|^2$ for all $i,j\in \mathbb{I}$.
\item $\lim_{k\to \infty}\|x^{x+1}-x^k\|^2=0$.
\end{enumerate}
\end{proposition}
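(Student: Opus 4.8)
The plan is to establish all four items essentially simultaneously, since (ii)--(iv) will fall out of the Fej\'er-type estimate proved in (i). First I would fix $x^*\in S_*\cap X$ and analyze a single inner step \eqref{Fk}, $z_{i+1}^k=P_X\big(P_{H_i(\bar x_i^k,\bar u_i^k)}(z_i^k)\big)$. By Lemma \ref{propseq}, $x^*\in S_*^i\subseteq H_i(\bar x_i^k,\bar u_i^k)$, and $x^*\in X$; so $x^*$ lies in both sets onto which we project. Using Proposition \ref{proj}(i) twice (first for $P_X$, then for $P_{H_i}$, each time dropping the nonnegative correction term), or more simply the standard quasi-nonexpansiveness of a projection onto a convex set containing $x^*$, I get
\begin{equation*}
\|z_{i+1}^k-x^*\|^2\le \|P_{H_i(\bar x_i^k,\bar u_i^k)}(z_i^k)-x^*\|^2\le \|z_i^k-x^*\|^2-\|P_{H_i(\bar x_i^k,\bar u_i^k)}(z_i^k)-z_i^k\|^2.
\end{equation*}
Next I would compute the last term explicitly. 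The halfspace $H_i(x,u)=\{y:\la A_i(x)+u,y-x\ra\le 0\}$ has the form $\{y:\la a,y\ra\le b\}$ with $a=A_i(\bar x_i^k)+\bar u_i^k$, so the projection residual is $\frac{\big(\la A_i(\bar x_i^k)+\bar u_i^k,z_i^k-\bar x_i^k\ra\big)_+^2}{\|A_i(\bar x_i^k)+\bar u_i^k\|^2}$ when $z_i^k\notin H_i$, and by Corollary \ref{coro} the numerator dominates $\big(\tfrac{\alpha_{k,i}\delta}{\hat\beta}\big)^2\|z_i^k-J_i^k\|^4$. Combining, for each $i$,
\begin{equation}\label{eq:stepdecrease}
\|z_{i+1}^k-x^*\|^2\le \|z_i^k-x^*\|^2-\frac{\alpha_{k,i}^2\delta^2}{\hat\beta^2\,\|A_i(\bar x_i^k)+\bar u_i^k\|^2}\|z_i^k-J_i^k\|^4,
\end{equation}
while in the trivial case $i\in\mathbb{I}_k^*$ one has $z_{i+1}^k=z_i^k$ and the inequality holds with a zero subtracted term.

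Telescoping \eqref{eq:stepdecrease} over $i=1,\dots,m$ and using $z_1^k=x^k$, $z_{m+1}^k=x^{k+1}$ gives $\|x^{k+1}-x^*\|^2\le\|x^k-x^*\|^2$ for all $k$ (no correction term can make it increase), which is exactly Fej\'er convergence of $(x^k)$ to $S_*\cap X$ — item (i). Item (ii), boundedness, is then immediate from Proposition \ref{punto}(i). For item (iii), Proposition \ref{punto}(ii) already gives that $\lim_k\|x^k-x^*\|^2=\lim_k\|z_1^k-x^*\|^2$ exists; to promote this to every $j$ I would note from the telescoped inequality that $0\le \|z_j^k-x^*\|^2-\|z_{j+1}^k-x^*\|^2$ is summable in $k$ for each fixed $j$ (its sum over $k$ is bounded by $\|x^0-x^*\|^2$ after summing over $j$ as well), hence each consecutive difference $\|z_j^k-x^*\|^2-\|z_{j+1}^k-x^*\|^2\to 0$; since $\|z_1^k-x^*\|^2$ converges and there are only finitely many indices $j$, all the $\|z_j^k-x^*\|^2$ converge to the same limit. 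Finally, for item (iv) — which as stated should read $\lim_k\|x^{k+1}-x^k\|^2=0$ — I would chain the bounds $\|z_{i+1}^k-z_i^k\|\le\|z_{i+1}^k-x^*\|+\|x^*-z_i^k\|$ is not tight enough, so instead use that $P_X$ and $P_{H_i}$ are nonexpansive to write $\|z_{i+1}^k-z_i^k\|\le\|P_{H_i(\bar x_i^k,\bar u_i^k)}(z_i^k)-z_i^k\|$, whose square was bounded above by $\|z_i^k-x^*\|^2-\|z_{i+1}^k-x^*\|^2\to 0$ by item (iii); summing the resulting estimates $\|z_{i+1}^k-z_i^k\|\to 0$ over $i=1,\dots,m$ and applying the triangle inequality yields $\|x^{k+1}-x^k\|\to 0$.

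The main obstacle is the denominator $\|A_i(\bar x_i^k)+\bar u_i^k\|$ in \eqref{eq:stepdecrease}: to conclude anything quantitative from the decrease one must know this quantity stays bounded. Here I would invoke boundedness of $(x^k)$ (item (ii)), hence of each $z_i^k$ and of the convex combinations $\bar x_i^k$ (they lie in the bounded convex hull of bounded sequences, and $X$ being the common domain keeps $J_i^k$ controlled through the resolvent), then use Assumption \ref{a3} to keep $\bar u_i^k\in B[0,R]$ bounded and Lemma \ref{bound}(ii) — boundedness of the maximal monotone $A_i$ on bounded subsets of the interior of its domain, together with Assumption \ref{a1} — to bound $\|A_i(\bar x_i^k)\|$. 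This gives a uniform constant $M$ with $\|A_i(\bar x_i^k)+\bar u_i^k\|\le M$ for all $k,i$; after substituting $M$ for the denominator, \eqref{eq:stepdecrease} becomes a clean summable estimate driving items (iii) and (iv), and this bounded-denominator fact is precisely what will be reused in the next proposition to extract $\|z_i^k-J_i^k\|\to 0$.
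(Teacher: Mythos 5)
Your proposal is correct and rests on the same mechanism as the paper's proof: $x^*\in S_*\cap X$ lies in both $X$ and $H_i(\bar x_i^k,\bar u_i^k)$ (Lemma \ref{propseq}), so Proposition \ref{proj}(i) applied to the composite projection in \eqref{Fk} gives $\|z_{i+1}^k-x^*\|^2\le\|z_i^k-x^*\|^2-\|P_{H_i(\bar x_i^k,\bar u_i^k)}(z_i^k)-z_i^k\|^2$, which telescopes over $i$ to the Fej\'er inequality \eqref{fejer-des}. The differences are organizational rather than substantive. For item (iii) the paper uses the shorter sandwich $\|x^{k+1}-x^*\|^2\le\|z_i^k-x^*\|^2\le\|x^k-x^*\|^2$ together with Proposition \ref{punto}(ii), whereas you sum the nonnegative consecutive differences over $k$ and $j$; both are valid. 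For item (iv) (which, as you note, should read $\lim_{k\to\infty}\|x^{k+1}-x^k\|^2=0$) the paper merely asserts it is a ``direct consequence of item (iii)'', which is not literally true without the projection-residual estimate; your argument via $\|z_{i+1}^k-z_i^k\|\le\|P_{H_i(\bar x_i^k,\bar u_i^k)}(z_i^k)-z_i^k\|$ (legitimate because $z_i^k\in X$, so $z_i^k=P_X(z_i^k)$) is precisely the content the paper only supplies later, in Proposition \ref{cadai} and Proposition \ref{samecluster}, so you have in effect filled a gap in the paper's own write-up. Finally, your explicit lower bound on the decrease term, $\alpha_{k,i}^2\delta^2\|z_i^k-J_i^k\|^4/\big(\hat\beta^2\|A_i(\bar x_i^k)+\bar u_i^k\|^2\big)$, together with the boundedness discussion for the denominator, is not needed for Proposition \ref{prop2} itself --- the paper defers exactly this material to Proposition \ref{cadai} --- but it is harmless here, modulo the degenerate case $A_i(\bar x_i^k)+\bar u_i^k=0$, in which $H_i(\bar x_i^k,\bar u_i^k)=\RR^n$ and the subtracted term is simply zero.
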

\begin{proof}
\begin{enumerate}
\item Take $x^*\in S_*\cap X$.  Using \eqref{Fk}, \eqref{xk1}, Proposition \ref{proj}(i) and Lemma \ref{propseq}, we have
\begin{eqnarray}\label{fejer-des}\nonumber\|x^{k+1}-x^{*}\|^2&=&\|z_{m+1}^k-x^{*}\|^2= \|P_{X}(P_{H_m(\bar{x}_m^k,\bar{u}_m^k)}(z_m^k))-P_X(P_{H_m(\bar{x}_m^k,\bar{u}_m^k)}(x^{*}))\|^2\\&\leq& \|z_m^k-x^*\|^2 \leq \cdots \leq \|z_1^k-x^*\|^2=\|x^k-x^*\|^2.\end{eqnarray} So, $\|x^{k+1}-x^{*}\|\le \|x^k-x^*\|$.

\item Follows immediately from item (i).

\item Take $x^* \in S_*\cap X$. Using \eqref{fejer-des} yields for all $i \in \mathbb{I}$ that
\begin{equation}\label{ineq}
\|x^{k+1}-x^*\|^2\le \|z_i^k-x^*\|^2\leq \|x^{k}-x^{*}\|^2.
\end{equation}
Now using Proposition \ref{punto} and item (ii) taking limits over $k$ we have that $(\|z_i^k-x^*\|^2)_{k\in \NN}$ is convergent for the same limits that $(\|x^k-x^*\|^2)_{k\in\NN}$, independent of the $i\in \mathbb{I}$, obtaining the result.

\item Is a direct consequence of item (iii).
\end{enumerate}
\end{proof}
\begin{proposition}\label{cadai}
For all $i\in \mathbb{I}$ we have,
$$\lim_{k\to \infty}\la A_i(\bar{x}_i^k)+\bar{u}_i^k,z_i^k-\bar{x}_i^k \ra=0.$$
\end{proposition}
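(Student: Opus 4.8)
The plan is to extract a geometric consequence of the projection step \eqref{Fk} that bounds the displacement $\|z_{i+1}^k - z_i^k\|^2$ from below by a multiple of $\la A_i(\bar x_i^k)+\bar u_i^k, z_i^k-\bar x_i^k\ra^2$, and then to use the summability of the displacements (which follows from the Fej\'er property, Proposition \ref{prop2}) to force the inner product to zero. Concretely, I would first observe that $z_{i+1}^k = P_X(P_{H_i(\bar x_i^k,\bar u_i^k)}(z_i^k))$, and since $z_i^k \in X$ while $x^* \in S_* \cap X \subseteq H_i(\bar x_i^k,\bar u_i^k)$ by Lemma \ref{propseq}, a standard estimate for composed projections onto two convex sets both containing $x^*$ gives
\begin{equation*}
\|z_{i+1}^k - x^*\|^2 \le \|z_i^k - x^*\|^2 - \|P_{H_i(\bar x_i^k,\bar u_i^k)}(z_i^k) - z_i^k\|^2 .
\end{equation*}
This is the key inequality; it uses Proposition \ref{proj}(i) for the outer projection $P_X$ (which is nonexpansive and fixes $x^*$) together with the fact that $z_i^k\in X$, and Proposition \ref{proj}(ii) for the inner projection onto the halfspace.

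Next I would compute $\|P_{H_i(\bar x_i^k,\bar u_i^k)}(z_i^k) - z_i^k\|$ explicitly. Since $H_i(\bar x_i^k,\bar u_i^k)$ is the halfspace $\{y:\la A_i(\bar x_i^k)+\bar u_i^k, y-\bar x_i^k\ra\le 0\}$ and, by Corollary \ref{coro}, $z_i^k$ lies strictly on the positive side (the inner product $\la A_i(\bar x_i^k)+\bar u_i^k, z_i^k-\bar x_i^k\ra$ is $\ge 0$), the projection onto a halfspace has the closed form
\begin{equation*}
\big\|P_{H_i(\bar x_i^k,\bar u_i^k)}(z_i^k) - z_i^k\big\| = \frac{\la A_i(\bar x_i^k)+\bar u_i^k, z_i^k-\bar x_i^k\ra}{\|A_i(\bar x_i^k)+\bar u_i^k\|},
\end{equation*}
provided $A_i(\bar x_i^k)+\bar u_i^k\neq 0$ (if it is zero the inner product vanishes and there is nothing to prove for that index and iteration). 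Combining the two displays and summing over $k$, telescoping the right-hand side using Proposition \ref{prop2}(iii) (the limit of $\|z_i^k-x^*\|^2$ exists and is the same for all indices), I obtain
\begin{equation*}
\sum_{k=0}^{\infty} \frac{\la A_i(\bar x_i^k)+\bar u_i^k, z_i^k-\bar x_i^k\ra^2}{\|A_i(\bar x_i^k)+\bar u_i^k\|^2} < \infty ,
\end{equation*}
hence the general term tends to $0$.

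The remaining, and I expect main, obstacle is to pass from the vanishing of the \emph{normalized} quantity above to the vanishing of $\la A_i(\bar x_i^k)+\bar u_i^k, z_i^k-\bar x_i^k\ra$ itself; for this I need a uniform upper bound on $\|A_i(\bar x_i^k)+\bar u_i^k\|$ along the sequence. This is where the hypotheses are used: $(x^k)_{k\in\NN}$ is bounded by Proposition \ref{prop2}(ii), hence so are the $z_i^k$ and (by convexity, since $\bar x_i^k$ is a convex combination of $J_i^k$ and $x^k$, and $J_i^k=(I+\beta_k B_i)^{-1}(I-\beta_k A_i)(z_i^k)$ stays bounded by Proposition \ref{inversa} together with Lemma \ref{bound}(ii) and Assumption \ref{a1}) the points $\bar x_i^k$; then $\bar u_i^k \in B[0,R]$ is bounded by construction, and $A_i(\bar x_i^k)$ is bounded by Lemma \ref{bound}(ii) since $A_i$ is maximal monotone and the $\bar x_i^k$ lie in a bounded subset of $\dom(A_i)$ (using Assumption \ref{a1} and \ref{a3}). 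With the common bound $M:=\sup_k \|A_i(\bar x_i^k)+\bar u_i^k\| < \infty$ in hand, $\la A_i(\bar x_i^k)+\bar u_i^k, z_i^k-\bar x_i^k\ra^2 \le M^2 \cdot (\text{normalized term}) \to 0$, and since the inner product is nonnegative by Corollary \ref{coro}, the limit is $0$, which is the claim.
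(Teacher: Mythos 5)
Your proposal is correct and follows essentially the same route as the paper: the composed-projection inequality from Proposition \ref{proj}(i), the closed-form expression for the projection onto the halfspace $H_i(\bar{x}_i^k,\bar{u}_i^k)$, the boundedness of $\|A_i(\bar{x}_i^k)+\bar{u}_i^k\|$ via boundedness of the iterates, and Proposition \ref{prop2}(iii) to kill the right-hand side. The only (harmless) differences are that you phrase the last step as a telescoping/summability argument where the paper takes the limit of the difference directly, and you explicitly handle the degenerate case $A_i(\bar{x}_i^k)+\bar{u}_i^k=0$, which the paper leaves implicit.
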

\begin{proof}
For all $i\in \mathbb{I}$. Using Proposition \ref{proj}(i) and \eqref{Fk} for all $x^*\in S_*\cap X$ we have
\begin{align}\label{todoi}
\|z_{i+1}^{k}-x^*\|^2 =&\|P_{X}(P_{H_i(\bar{x}_i^k,\bar{u}_i^k)}(z_i^k))-P_{X}(P_{H_i(\bar{x}_i^k,\bar{u}_i^k)}(x^*))\|^2\le \|P_{H_i(\bar{x}_i^k,\bar{u}_i^k)}(z_i^k)-P_{H_i(\bar{x}_i^k,\bar{u}_i^k)}(x^*)\|^2 \nonumber \\
\leq &\|z_i^k-x^*\|^2-\|P_{H_i(\bar{x}_i^k,\bar{u}_i^k)}(z_i^k)-z_i^k\|^2.
\end{align}
Now reordering \eqref{todoi}, we get
$$\|P_{H_i(\bar{x}_i^k,\bar{u}_i^k)}(z_i^k)-z_i^k\|^2 \leq \|z_{i}^k-x^*\|^2-\|z_{i+1}^{k}-x^*\|^2.$$
Using the fact that,
$$P_{H(\bar{x}_i^k,\bar{u}_i^k)}(z_i^k)=z_i^k-\frac{\la A_i(\bar{x}_i^k)+\bar{u}_i^k,z_i^k-\bar{x}_i^k  \ra}{\|A_i(\bar{x}_i^k)+\bar{u}_i^k\|^2}(A_i(\bar{x}_i^k)+\bar{u}_i^k),$$
and the previous equation, we have,
\begin{equation}\label{pasar al lim}
\frac{\big(\la A_i(\bar{x}_i^k)+\bar{u}_i^k,z_i^k-\bar{x}_i^k  \ra\big)^2}{\|A_i(\bar{x}_i^k)+\bar{u}_i^k\|^2}\leq \|z_i^k-x^*\|^2-\|z_{i+1}^{k}-x^*\|^2.
\end{equation}
By Proposition \ref{inversa} and the continuity of $A_i$ we have that $J_i$ is continuo, since $(x^k)_{k\in \NN}$ and $(\beta_k)_{k\in \NN}$ are bounded then $\{J_i^k\}_{k\in \NN}$, $)(z_i^k)_{k\in \NN}$ and $\{\bar{x}_i^k\}_{k\in \NN}$ are bounded, implying the boundedness of $\{\|A_i(\bar{x}_i^k)+\bar{u}_i^k\|\}_{k\in \NN}$ for all $i\in \mathbb{I}$.

\noindent Using Proposition \ref{prop2}(iii), the right side of \eqref{pasar al lim} goes to 0, when $k$ goes to $\infty$, establishing the result.
\end{proof}

\begin{proposition}\label{samecluster}
For all $i\in \mathbb{I}$ we have $\lim_{k \rightarrow \infty}\|z_{i+1}^k-z_i^k\|=0$.
\end{proposition}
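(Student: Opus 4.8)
The plan is to show $\|z_{i+1}^k-z_i^k\|\to 0$ by first reducing to the inner halfspace projection and then transferring that estimate through the outer projection onto $X$. For indices $i\in\mathbb{I}_k^*$ the claim is trivial since there $z_{i+1}^k=z_i^k$, so one may fix $i\in\mathbb{I}\setminus\mathbb{I}_k^*$ throughout.

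First I would record that $z_i^k\in X$ for every $i$ and every $k$: indeed $z_1^k=x^k$, with $x^0\in X$ and $x^{k+1}=z_{m+1}^k=P_X(\cdots)\in X$, so a double induction (on $k$, then on $i$, using \eqref{Fk} and the convention $z_{i+1}^k=z_i^k$ on $\mathbb{I}_k^*$) gives $z_i^k\in X$. Consequently $P_X(z_i^k)=z_i^k$, and since $P_X$ is nonexpansive by Proposition \ref{proj}(i),
\[
\|z_{i+1}^k-z_i^k\|=\big\|P_X\big(P_{H_i(\bar x_i^k,\bar u_i^k)}(z_i^k)\big)-P_X(z_i^k)\big\|\le\big\|P_{H_i(\bar x_i^k,\bar u_i^k)}(z_i^k)-z_i^k\big\|,
\]
so it suffices to prove $\big\|P_{H_i(\bar x_i^k,\bar u_i^k)}(z_i^k)-z_i^k\big\|\to 0$.

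For this I would use the explicit projection formula onto the halfspace $H_i(\bar x_i^k,\bar u_i^k)$ already invoked in the proof of Proposition \ref{cadai}: when $z_i^k\notin H_i(\bar x_i^k,\bar u_i^k)$ one has
\[
\big\|P_{H_i(\bar x_i^k,\bar u_i^k)}(z_i^k)-z_i^k\big\|^2=\frac{\big(\la A_i(\bar x_i^k)+\bar u_i^k,z_i^k-\bar x_i^k\ra\big)^2}{\|A_i(\bar x_i^k)+\bar u_i^k\|^2},
\]
while both sides vanish when $z_i^k\in H_i(\bar x_i^k,\bar u_i^k)$; by Corollary \ref{coro} the inner product is nonnegative, so in either case this quantity equals the left-hand side of \eqref{pasar al lim}. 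Taking any $x^*\in S_*\cap X$ (nonempty by assumption), \eqref{pasar al lim} gives
\[
\big\|P_{H_i(\bar x_i^k,\bar u_i^k)}(z_i^k)-z_i^k\big\|^2\le\|z_i^k-x^*\|^2-\|z_{i+1}^k-x^*\|^2,
\]
and the right-hand side tends to $0$ because, by Proposition \ref{prop2}(iii), both $\|z_i^k-x^*\|^2$ and $\|z_{i+1}^k-x^*\|^2$ converge to the same limit. Combining with the nonexpansiveness estimate above yields the claim.

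I do not expect a serious obstacle; the only points requiring care are justifying that every $z_i^k$ lies in $X$ (so the outer projection is harmless) and the case split in the halfspace projection formula, i.e. checking that \eqref{pasar al lim} still bounds $\big\|P_{H_i(\bar x_i^k,\bar u_i^k)}(z_i^k)-z_i^k\big\|^2$ when $z_i^k$ already lies in $H_i(\bar x_i^k,\bar u_i^k)$ (it does, both sides being zero). Everything else is a direct appeal to Proposition \ref{prop2}(iii) and Corollary \ref{coro}.
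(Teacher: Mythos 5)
Your proof is correct and follows essentially the same route as the paper: reduce via $z_i^k\in X$ and the nonexpansiveness of $P_X$ to the halfspace projection distance, then bound $\|P_{H_i(\bar x_i^k,\bar u_i^k)}(z_i^k)-z_i^k\|^2$ by $\|z_i^k-x^*\|^2-\|z_{i+1}^k-x^*\|^2$ and invoke Proposition \ref{prop2}(iii). If anything, your version is tidier on one point: the paper cites the statement of Proposition \ref{cadai} (which only says the inner product vanishes), whereas you use the intermediate inequality from its proof, which controls the projection distance directly without needing a lower bound on $\|A_i(\bar x_i^k)+\bar u_i^k\|$.
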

\begin{proof}
By definition of $z_{i+1}^k$ and using that $z_i^k\in X$ for all $i\in \mathbb{I}$ and $k\in\NN$ we have that
\begin{equation}\label{zero}
\|z_{i+1}^k-z_i^k\| =\|P_X(P_{H_i(\bar{x}_i^k,\bar{u}_i^k)}(z_i^k))-P_X(z_i^k)\|\leq\|P_{H_i(\bar{x}_i^k,\bar{u}_i^k)}(z_i^k)-z_i^k\|.
\end{equation}
The right side of the equation \eqref{zero} go to zero by Proposition \ref{cadai}, then the result follow.
\end{proof}

A direct consequence of the previous proposition is that $\lim_{k\rightarrow} \infty\|x^{k+1}-x^k\|=0$, just summing and resting $z_i^k$ for $i=2,3, \cdots, m-1$ and using Cauchy-Swartz, we have the result. Other direct consequence of the Proposition \ref{samecluster} is that the sequences generated by the algorithm $(x^k)_{k\in\NN}$, $(z_i^k)_{k\in\NN}$ for each $i\in \mathbb{I}$ have the same clusters points.

\noindent Next we establish our main convergence result of the algorithm.
\begin{theorem}\label{teo1}
The sequence $(x^k)_{k\in \NN}$ converges to some element belonging to $S_*\cap X $.
 \end{theorem}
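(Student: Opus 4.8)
The plan is to combine the Fej\'er-type machinery already established with a cluster-point argument to show the unique limit lies in $S_*\cap X$. By Proposition \ref{prop2}(i) the sequence $(x^k)_{k\in\NN}$ is Fej\'er convergent to $S_*\cap X$, and by Proposition \ref{punto}(iii) it suffices to exhibit one cluster point of $(x^k)_{k\in\NN}$ lying in $S_*\cap X$. Since $(x^k)_{k\in\NN}$ is bounded (Proposition \ref{prop2}(ii)), pick a convergent subsequence $(x^{k_\ell})_{\ell\in\NN}$ with limit $\bar x$. Because every $x^k\in X$ and $X$ is closed, $\bar x\in X$. It remains to prove $\bar x\in S_*$, i.e.\ $0\in A_i(\bar x)+B_i(\bar x)$ for every $i\in\mathbb I$; equivalently, by Proposition \ref{parada}, that $\bar x=(I+\beta B_i)^{-1}(I-\beta A_i)(\bar x)$ for some (hence any) $\beta>0$.

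The key step is to pass to the limit in the Armijo inequality. Fix $i\in\mathbb I$. By Proposition \ref{samecluster} all the inner sequences $(z_i^k)_{k\in\NN}$ share the same cluster points as $(x^k)_{k\in\NN}$, so along the subsequence $z_i^{k_\ell}\to\bar x$; by continuity of the resolvent map $J_i=(I+\beta_k B_i)^{-1}(I-\beta_k A_i)$ (used already in the proof of Proposition \ref{cadai}, with $\beta_k$ in the compact interval $[\check\beta,\hat\beta]$, passing to a further subsequence so that $\beta_{k_\ell}\to\bar\beta$) we get $J_i^{k_\ell}\to\bar J_i:=(I+\bar\beta B_i)^{-1}(I-\bar\beta A_i)(\bar x)$. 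From Corollary \ref{coro},
\[
\la A_i(\bar x_i^{k})+\bar u_i^{k},\,z_i^{k}-\bar x_i^{k}\ra\;\ge\;\frac{\alpha_{k,i}\,\delta}{\hat\beta}\,\|z_i^{k}-J_i^k\|^2\;\ge\;0,
\]
while Proposition \ref{cadai} forces the left-hand side to $0$ along $k\to\infty$. Hence $\alpha_{k_\ell,i}\,\|z_i^{k_\ell}-J_i^{k_\ell}\|^2\to0$. If $\|z_i^{k_\ell}-J_i^{k_\ell}\|\to0$ then $\bar x=\bar J_i$ and we are done for this $i$. Otherwise, along a further subsequence $\|z_i^{k_\ell}-J_i^{k_\ell}\|$ is bounded away from $0$, so $\alpha_{k_\ell,i}=\theta^{j_i(k_\ell)}\to0$, i.e.\ the Armijo exponents $j_i(k_\ell)\to\infty$. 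In that case the stopping rule \eqref{jk} failed at exponent $j_i(k_\ell)-1$: writing $w_\ell:=\theta^{j_i(k_\ell)-1}J_i^{k_\ell}+(1-\theta^{j_i(k_\ell)-1})z_i^{k_\ell}$ and choosing $u_\ell\in B_i(w_\ell)\cap B[0,R]$ (nonempty by Assumption \ref{a3}, since $w_\ell$ stays in a bounded subset of $\cap_i\dom B_i$),
\[
\Big\la A_i(w_\ell)+u_\ell,\;z_i^{k_\ell}-J_i^{k_\ell}\Big\ra\;<\;\frac{\delta}{\beta_{k_\ell}}\,\|z_i^{k_\ell}-J_i^{k_\ell}\|^2 .
\]
Since $\theta^{j_i(k_\ell)-1}\to0$, $w_\ell\to\bar x$; the ball $B[0,R]$ is compact, so after a further subsequence $u_\ell\to\bar u$, and by closedness of $\gr B_i$ (Lemma \ref{bound}(i)) $\bar u\in B_i(\bar x)$. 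Passing to the limit, using continuity of $A_i$ and $z_i^{k_\ell}-J_i^{k_\ell}\to\bar x-\bar J_i$, gives
\[
\la A_i(\bar x)+\bar u,\;\bar x-\bar J_i\ra\;\le\;\frac{\delta}{\bar\beta}\,\|\bar x-\bar J_i\|^2 .
\]
On the other hand, from \eqref{jota}, $\beta_{k_\ell}A_i(z_i^{k_\ell})=z_i^{k_\ell}-J_i^{k_\ell}-\beta_{k_\ell}v_i^{k_\ell}$ with $v_i^{k_\ell}\in B_i(J_i^{k_\ell})$; since $J_i^{k_\ell}\to\bar J_i$ and $B_i$ is bounded on bounded sets (Lemma \ref{bound}(ii), as $J_i^{k_\ell}\in\dom B_i$ stays bounded), passing to a subsequence $v_i^{k_\ell}\to\bar v$ with $\bar v\in B_i(\bar J_i)$ by closedness of the graph, and we obtain $\bar\beta A_i(\bar x)=\bar x-\bar J_i-\bar\beta\bar v$. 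Substituting into the displayed inequality and invoking monotonicity of $B_i$ on $(\bar J_i,\bar v)$ and $(\bar x,\bar u)$ — exactly as in the proof of Proposition \ref{propdef} — yields $\|\bar x-\bar J_i\|^2\le\delta\|\bar x-\bar J_i\|^2$, hence $(1-\delta)\|\bar x-\bar J_i\|^2\le0$, so $\bar x=\bar J_i$, contradicting $\|z_i^{k_\ell}-J_i^{k_\ell}\|\not\to0$. Either way $\bar x=(I+\bar\beta B_i)^{-1}(I-\bar\beta A_i)(\bar x)$, so by Proposition \ref{parada} $\bar x\in S_*^i$.

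Since $i\in\mathbb I$ was arbitrary (taking for each $i$ the appropriate nested subsequence, which is harmless as $\mathbb I$ is finite), $\bar x\in\cap_{i\in\mathbb I}S_*^i=S_*$, and with $\bar x\in X$ we get $\bar x\in S_*\cap X$. Then Proposition \ref{punto}(iii) applied to the Fej\'er sequence $(x^k)_{k\in\NN}$ gives $x^k\to\bar x\in S_*\cap X$, which is the claim. $\hfill\blacksquare$

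The step I expect to be the main obstacle is the dichotomy on $\|z_i^{k_\ell}-J_i^{k_\ell}\|$ and the limiting argument in the ``step size goes to zero'' branch: one must juggle three nested subsequences (for $\beta_{k_\ell}$, for $u_\ell$, and for $v_i^{k_\ell}$), keep all iterates in a common bounded set so that Assumption \ref{a3} and Lemma \ref{bound}(ii) apply, and then reproduce the monotonicity cancellation of Proposition \ref{propdef} in the limit rather than at fixed $k$. Handling the index set $\mathbb I$ uniformly — so that the final cluster point works simultaneously for all $i$ — also needs a little care, but is routine because $\mathbb I$ is finite.
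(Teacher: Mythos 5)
Your proposal is correct and follows essentially the same route as the paper's own proof: Fej\'er convergence plus Proposition \ref{punto}(iii) reduces everything to showing one cluster point lies in $S_*\cap X$, and the dichotomy on $\alpha_{k,i}\|z_i^k-J_i^k\|^2\to 0$ (your two branches are the paper's cases (a) and (b), with your $w_\ell$ being the paper's $y_i^{j_k}$) is resolved by passing to the limit in the failed Armijo inequality and reproducing the monotonicity cancellation from Proposition \ref{propdef}. The only differences are presentational (you run case (b) as a contradiction rather than a direct computation, and you are somewhat more explicit about the nested subsequences and boundedness needed for Assumption \ref{a3} and Lemma \ref{bound}).
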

\begin{proof}
 Since  $(x^k)_{k\in \NN}$ is bounded then have cluster points, we claim that they belongs to  $S_*\cap X$, as every $x^k$ belong to $X$ by definition and $X$ is closed, then all clusters point of $(x^k)_{k\in\NN}$ belong to $X$. The sequence $(x^k)_{k\in\NN}$ is Fej\'er convergent to the set $S_*\cap X$, then by Proposition \ref{punto} (iii) the whole sequence will be convergent to this set. Let $(x^{j_k})_{k\in \NN}$ be a convergent subsequence of $(x^k)_{k\in \NN}$ such that, for all $i\in \mathbb{I}$ the sequences  $(z_i^{j_k})_{k\in\NN}$, $(\bar{x}_i^{j_k})_{k\in \NN}, (\bar{u}_i^{j_k})_{k\in \NN}, (\alpha_{j_k,i})_{k\in \NN}$ and $(\beta_{j_k})_{k\in \NN}$ are convergents, and as we see before as consequence of Proposition \ref{samecluster}, calling the limits of $(x^{j_k})_{k\in \NN}$ as $\tilde{x}$, we have $\lim _{k\to \infty}x^{j_k}=\lim _{k\to \infty}z_i^{j_k} =\tilde{x}$, for all $i\in \mathbb{I}$. \\
Using Proposition \ref{cadai} and taking limits in \eqref{desig-muy-usada} over the subsequence $(j_k)_{k\in \NN}$, we have for all $i\in \mathbb{I}$,
\begin{equation}\label{limite}
0=\lim_{k\to \infty}\la A_i(\bar{x}_i^{j_k})+\bar{u}_i^{j_k},z_i^{j_k}-\bar{x}_i^{j_k} \ra \ge \lim_{k\to \infty}  \frac{\alpha_{j_k,i}\delta}{\hat{\beta}}\|x^{j_k}-J_i^{j_k}\|^2\geq0.
\end{equation}
Therefore,
\begin{equation*}
\lim_{k\to \infty} \alpha_{j_k,i}\|z_i^{j_k}-J_i^{j_k}\|=0.
\end{equation*}
Now consider the two possible cases.

(a) First, assume that $\lim_{k\to \infty}\alpha_{j_k,i}\ne 0$, i.e., $\alpha_{j_k,i}\ge \bar{\alpha}$ for all $k$ and some  $\bar{\alpha}>0$. In view of (\ref{limite}),

    \begin{equation}\label{limcero}
    \lim_{k\to \infty}\|z_i^{j_k}-J_i^{j_k}\|=0.
    \end{equation}
    Since $J_i$ is continuous, by the continuity of $A_i$ and $(I+\beta_k B_i)^{-1}$ and by Proposition \ref{inversa}, \eqref{limcero} becomes
    \begin{equation*}
    \tilde{x}=J_i(\tilde{x},\tilde{\beta}):=(I+\tilde{\beta} B_i)^{-1}(I-\tilde{\beta} A_i)(\tilde{x}),
    \end{equation*}
    which implies that $\tilde{x}\in S_*^i$ for all $i\in \mathbb{I}$ using Proposition \ref{parada}. Then $\tilde{x}\in S_*$ establishing the claim.

 (b) On the other hand, if $\lim_{k\to \infty}\alpha_{j_k,i}=0$ then for $\theta \in (0,1)$ as in the algorithm, we have
    $$\lim_{k\to\infty}\frac{\alpha_{j_k,i}}{\theta}=0.$$
    Define
    $$y^{j_k}_i:=\frac{\alpha_{j_k,i}}{\theta}J_i^{j_k}+\Big(1-\frac{\alpha_{j_k,i}}{\theta}\Big)z_i^{j_k}.$$
    Then,
    \begin{equation}\label{ykgox}
    \lim_{k\to\infty}y_i^{j_k}=\tilde{x}.
    \end{equation}
    Using the definition of the $j_i(k)$ and \eqref{alphak}, we have that $y_i^{j_k}$ does not satisfy \eqref{jk} implying
    \begin{equation}\label{conse}
    \Big \la A_i (y^{j_k}_i)+u^{j_k}_{j_i(k)-1}, z_i^{j_k}-J_i^{j_k}\Big \ra <\frac{\delta}{\beta_{j_k}}\|z_i^{j_k} -J_i^{j_k}\|^2,
    \end{equation}

    for $u^{j_k}_{j(j_k)-1,i}\in B_i(y^{j_k}_i)$ and all $k\in \NN$ and $i\in \mathbb{I}$.\\
    Redefining the subsequence $\{j_k\}_{k\in \NN}$, if necessary, we may assume that  $\{u^{j_k}_{j(j_k)-1,i}\}_{k\in \NN}$ converges to $\tilde{u}_i$. By the maximality of $B_i$, $\tilde{u}_i$ belongs to $B_i(\tilde{x})$. Using the continuity of $J_i$, $(J_i^{j_k})_{k\in \NN}$ converges to $J_i(\tilde{x},\tilde{\beta})$ as defined in the first case. Using \eqref{ykgox} and taking limit in (\ref{conse}) over the subsequence $(j_k)_{k\in \NN}$, we have
    \begin{equation}\label{tiu}
    \Big\la A_i(\tilde{x}) + \tilde{u}_i, \tilde{x}- J_i(\tilde{x},\tilde{\beta})\Big\ra \le \frac{\delta}{\tilde{\beta}}\|\tilde{x}-J_i(\tilde{x},\tilde{\beta})\|^2.
    \end{equation}
    Using the definition of $J_i(\tilde{x},\tilde{\beta}):=(I+\tilde{\beta} B_i)^{-1}(I-\tilde{\beta} A_i)(\tilde{x})$ and multiplying by $\tilde{\beta}$ on both sides of (\ref{tiu}), we get
    \begin{equation*}
    \la \tilde{x}-J_i(\tilde{x},\tilde{\beta})-\tilde{\beta}\tilde{v}_i+\tilde{\beta}\tilde{u}_i, \tilde{x}-J_i(\tilde{x},\tilde{\beta})\ra \le \delta\| \tilde{x}-J_i(\tilde{x},\tilde{\beta})\|^2,
    \end{equation*}
    where $\tilde{v}_i\in B_i(J_i(\tilde{x},\tilde\beta))$. Applying the monotonicity of $B_i$, we obtain
    $$\| \tilde{x}-J_i(\tilde{x},\tilde{\beta})\|^2 \le \delta \| \tilde{x}-J_i(\tilde{x},\tilde{\beta})\|^2,$$
    implying that $\| \tilde{x}-J_i(\tilde{x},\tilde{\beta})\|\le 0$. Thus, $\tilde{x}=J_i(\tilde{x},\tilde{\beta})$ and hence, $\tilde{x}\in S_*^i$ for all $i\in \mathbb{I}$, thus $\tilde{x} \in S_*$ This prove the convergence of the whole sequence to the set set $S_*\cap X$.
\end{proof}

\section{Conclusions}
In this paper, we present an hybrid algorithm combining a variant of forward-backward splitting methods and the alternative projection method for solving a system o inclusion problems composed by the sum of two operators. A linesearch, for relax the hypothesis of Lipschitz continuity on forwards operators, have been proposed. The convergence analyze of the algorithm is proved. The results presented here, improve the previous in the literature by relaxing the hypothesis and the subproblems calculated here are computationally cheapest that knowing in the literature.

\section*{Acknowledgments}

The author was partially supported by CNPq grant 200427/2015-6. This
work was concluded while the author visiting the University of South
Australia, the author would like to thanks the great hospitality received.
\bigskip

\bibliographystyle{plain}

\end{document}